\newtheorem{theorem}{Theorem}[section]
\newtheorem{theoremA}{Theorem}
\newtheorem{lemma}[theorem]{Lemma}
\newtheorem{proposition}[theorem]{Proposition}
\newtheorem{corollary}[theorem]{Corollary}
\theoremstyle{definition}
\newtheorem{definition}[theorem]{Definition}
\theoremstyle{remark}
\newtheorem{remark}[theorem]{Remark}
\newtheorem{example}[theorem]{Example}
\newcommand{\Lg}{\mathfrak{g}}
\newcommand{\Lie}{\mathfrak{Lie}}
\newcommand{\Lt}{\mathfrak{t}}
\newcommand{\K}{\mathop\mathrm{Ker}}
\newcommand{\I}{\mathop\mathrm{Im}}
\newcommand{\ham}{\mathfrak{ham}}
\newcommand{\norm}[1]{\Vert #1\Vert}
\newcommand{\Aut}{\mathrm{Aut}}
\newcommand{\aut}{\mathfrak{aut}}
\newcommand{\id}{\operatorname{\text{\sf id}}}
\newcommand{\T}{\mathcal{T}}
\newcommand{\LL}{\mathcal{L}}
\newcommand{\TT}{\mathbb{T}}
\newcommand{\al}{\alpha}
\newcommand{\la}{\lambda}
\renewcommand{\phi}{\varphi}
\newcommand{\ce}{\mathcal{C}^\infty}
\newcommand{\CC}{\mathbb{C}}
\newcommand{\RR}{\mathbb{R}}
\newcommand{\ZZ}{\mathbb{Z}}
\newcommand{\QQ}{\mathbb{Q}}
\newcommand{\Ka}{K\"{a}hler}
\begin{document}

\title{A characterisation of toric LCK manifolds}
\author{Nicolina Istrati}
\address{Univ Paris Diderot, Sorbonne Paris Cit\'{e}, Institut de Math\'{e}matiques de Jussieu-Paris Rive Gauche, UMR 7586, CNRS, Sorbonne Universit\'{e}s, UPMC Univ Paris 06, F-75013, Paris, France}
\email{nicolina.istrati@imj-prg.fr}
\date{\today}
\maketitle
\abstract 
We prove that a compact toric locally conformally \Ka\ manifold which is not \Ka\ admits a toric Vaisman structure, a fact which was conjectured in \cite{mmp}. This is the final step leading to the classification of compact toric locally conformally \Ka\ manifolds started in \cite{p} and \cite{mmp}. We also show, by constructing an example, that unlike in the symplectic case, toric locally conformally symplectic manifolds are not necessarily toric locally conformally \Ka.
\endabstract

\section{Introduction}

In the present paper, we are interested in the incarnation of toric geometry for locally conformally \Ka\ (LCK), or more generally, for locally conformally symplectic (LCS) manifolds. The beginnings of this study can be traced down to the article of I. Vaisman \cite{va1}, where he argues that LCS manifolds are the natural phase spaces for Hamiltonian mechanics and is the first to give a good notion of Hamiltonians in this context. 

An LCS structure on a smooth manifold is a non-degenerate two-form which, around every point of the manifold, differs from a local symplectic form by a conformal factor. In analogy, an LCK structure is a LCS structure together with a compatible complex structure, such that the non-degenerate form in this case is locally conformal to a \Ka\ form. As they appear in the current definitions and in the motivations given by I. Vaisman, LCS/LCK manifolds generalise symplectic/\Ka\ manifolds. However, in many ways, and in particular in the context of this paper, there exists a duality between the behaviour of strict LCS manifolds and (conformally) symplectic manifolds, and since the latter are already well understood in toric geometry,  we will call LCS manifolds only the former.

General Hamiltonian group actions and the corresponding reduction procedure in the LCS and LCK context have been considered by S. Haller and T. Rybicki in \cite{hr}, by R. Gini, L. Ornea and M. Parton in \cite{gop}, or by A. Otiman in \cite{o}. But only recently were Hamiltonian actions of maximal tori on LCK manifolds studied towards a classification, by M. Pilca in \cite{p} and by F. Madani, A. Moroianu and M. Pilca in \cite{mmp}. The program is as follows: there exists a class of LCK manifolds, called Vaisman manifolds, which is better understood via its many geometric properties. In particular, the universal cover of a Vaisman manifold is a \Ka\ cone over a Sasaki manifold. In \cite{p}, toric Vaisman manifolds are studied and it is shown that for every known existing equivalence of categories between them and some other class of manifolds, the Hamiltonian toric action also is equivalent to a natural Hamiltonian toric action in the given category. Then, in \cite{mmp}, it is shown that the corresponding toric Sasaki manifold to a toric Vaisman manifold is actually compact. But Sasaki manifolds are in particular contact, and compact toric contact manifolds have been classified by E. Lerman in \cite{l}. 

On the other hand, in \cite{mmp} toric LCK manifolds of complex dimension 2 have been given a classification, and it turns out that they all admit toric Vaisman metrics. Hence the question was raised of whether this is always the case, regardless of dimension. The main result of this paper is an affirmative answer to it, and so, together with the above cited papers, amounts to a classification of toric LCK manifolds as complex manifolds with a torus action. 

\begin{theoremA}\label{TheoremAA}
Let $(M,J,\Omega)$ be a compact LCK manifold that admits an effective holomorphic twisted Hamiltonian action of a torus of maximal dimension. Then there exists a (possibly different) LCK form $\Omega'$ with respect to which the same action is still twisted Hamiltonian, and such that the corresponding metric $g'$ is Vaisman. 
\end{theoremA}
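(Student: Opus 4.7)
The plan is to pass to the universal cover $\pi:\tilde M\to M$, on which $\tilde\Omega:=e^{-\sigma}\pi^*\Omega$ is a genuine \Ka\ form (with $d\sigma=\pi^*\theta$). The deck group $\Gamma\cong\ZZ$ acts by holomorphic homotheties of $\tilde\Omega$ through a character $\chi:\Gamma\to\RR_{>0}^*$, and, after passing to a finite cover of $T$ if needed, $T$ lifts to $\tilde T$ commuting with $\Gamma$.

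The first reduction is to average $\Omega$ over the compact group $T$. From the identity $\LL_X\Omega=\theta(X)\Omega$, valid for any twisted Hamiltonian generator $X$, the conformal class of $\Omega$ is $T$-invariant, so the $T$-average of a conformal representative provides a $T$-invariant LCK form; twisted Hamiltonicity is preserved under conformal change, since the moment maps rescale by the conformal factor. After this reduction, $\Omega$, $\theta$, and $g$ are $T$-invariant. Moreover, integrating $\LL_X\Omega=\theta(X)\Omega$ along the closed orbits of lattice generators of $T$ forces $\theta|_\Lt\equiv0$, and so $T$ acts by genuine symplectomorphisms of $\Omega$, $\sigma$ is $\tilde T$-invariant on $\tilde M$, and $(\tilde M,\tilde\Omega,\tilde T)$ is a noncompact toric \Ka\ manifold of maximal dimension with genuine moment map $\tilde\mu:\tilde M\to\Lt^*$.

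The heart of the argument is to exhibit $(\tilde M,\tilde g)$ as a \Ka\ cone. Using $\gamma^*\tilde\Omega=\chi(\gamma)\tilde\Omega$ and the commutativity of $\Gamma$ with $\tilde T$, one can normalize $\tilde\mu$ so that $\tilde\mu\circ\gamma=\chi(\gamma)\tilde\mu$; in particular the moment image $\tilde\mu(\tilde M)\subset\Lt^*$ is a $\chi(\Gamma)$-invariant open convex cone (convexity via the classical toric convexity theorem). I would then produce a rational $\xi\in\Lt$ such that $\langle\tilde\mu,\xi\rangle>0$ on $\tilde M$ and such that, setting $r:=\sqrt{2\langle\tilde\mu,\xi\rangle}$, one has $J\xi^*=r\partial_r$ and $\tilde g=dr^2+r^2 g_S$ for some metric $g_S$ on the level set $S:=\{r=1\}$. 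The pair $(S,g_S)$ is then toric Sasaki with Reeb field $-\xi^*|_S$; by the compactness argument of~\cite{mmp} it is compact, so the descent of $r^{-2}\tilde\Omega$ gives an LCK form $\Omega'$ on $M$ whose associated metric is Vaisman, with $T$ still twisted Hamiltonian by the Sasaki/cone dictionary.

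The principal obstacle is constructing $\xi$ and verifying the cone identities. A toric \Ka\ manifold with an additional scaling symmetry need not be a \Ka\ cone in general; the argument must exploit the interplay between the character $\chi$, the potential $\sigma$, and the lattice structure of $T$ to determine $\xi$ essentially uniquely and then show that the rescaled metric satisfies the cone equations. Convexity of $\tilde\mu(\tilde M)$ is one ingredient; the other is that $\sigma$ is $\chi$-homogeneous in a sense that ultimately pins it down as $\sigma=\log\langle\tilde\mu,\xi\rangle$ up to additive constants. Once this is established, the correspondence between toric Vaisman structures on $M$ and toric \Ka\ cones over compact toric Sasaki manifolds---made precise in~\cite{p} and~\cite{mmp}---closes the argument.
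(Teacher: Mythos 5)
Your proposal correctly identifies the target (a K\"ahler cone structure on the cover, equivalently a potential $f_0$ with $\Omega_0=dd^cf_0$ homogeneous under $\Gamma$), but the two steps that carry the actual content of the theorem are, respectively, unsound and missing. First, you invoke ``the classical toric convexity theorem'' to get that $\tilde\mu(\tilde M)$ is an open convex cone. The cover is non-compact, and the Atiyah--Guillemin--Sternberg convexity and connectedness theorems fail there (this is precisely why the paper avoids the toric-symplectic route; cf.\ the counterexamples addressed by Karshon--Lerman in \cite{kl}). So convexity of the moment image cannot be taken as an ingredient. Second, and more seriously, the construction of $\xi\in\Lt$ with $J\xi^*=r\partial_r$ and the identity $\sigma=\log\langle\tilde\mu,\xi\rangle$ up to constants is exactly the heart of the matter, and you explicitly defer it (``the principal obstacle is constructing $\xi$ and verifying the cone identities''), while also noting yourself that a toric K\"ahler manifold with a scaling symmetry need not be a cone. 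As written, the proposal states what must be proved rather than proving it.

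The paper's mechanism for producing the scaling field is worth contrasting with your plan, because it is what fills this gap. One extends the $\TT$-action to $\TT^c=(\CC^*)^n$, shows $\TT^c$ acts freely and transitively on the dense set of principal orbits of the minimal cover (freeness uses that $J X_\xi$ is the gradient of the Hamiltonian $\hat\mu^\xi$), and deduces that the deck group $\Gamma$ embeds into $\TT^c$. A primitive $\gamma\in\Gamma$ then extends to a one-parameter subgroup of $\TT^c$ descending to a circle action on $M$ generated by a field $C$ with $\theta(C)=\la\neq 0$ (because $\phi$ is not $\Gamma$-invariant); after averaging $\Omega$ over this circle, the projection $B=-\frac{1}{\la}\pi_{i\Lt}(C)\in i\Lt$ is the Euler field, and $f_0=\norm{B}^2_{g_0}$ satisfies $\Omega_0=dd^cf_0$ with $f_0$ being $\Gamma$-equivariant, so $\Omega'=f_0^{-1}dd^cf_0$ descends to the Vaisman form. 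The key idea your proposal lacks is this use of the complexified orbit to place $\Gamma$ inside $\TT^c$; without it there is no canonical candidate for $\xi$, and the cone identities you need do not follow from the moment-map data alone.
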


Remark that the universal cover of a LCK manifold is a non-compact \Ka\ manifold, so one might want to use the theory of toric symplectic manifolds in order to prove the result. However, in the non-compact world the theorems of convexity and connectedness for moment maps of Atiyah and Guillemin-Sternberg fail, and one no longer has a characterisation of the symplectic manifold in terms of the image of the moment map. As proven by E. Lerman and S. Tolman in \cite{kl}, classification results still are possible, but in terms of more complicated objects. Hence we chose to give a direct proof, not relying on the known facts from toric symplectic geometry.

The proof occupies Section \ref{proofA} and roughly goes as follows. First we remark that the holomorphic action of the compact torus $\TT$ on the manifold $M$ naturally extends to a holomorphic action of the complexified torus $\TT^c$. In particular, on the minimal \Ka\ cover $\hat M$ of $M$, $\TT^c$ has a dense connected open orbit, since the $\TT$-action is Hamiltonian. This allows us to view the deck group $\Gamma$ of $\hat M$ as a subgroup of $\TT^c$, and to extend it to a one-parameter subgroup of $\TT^c$. However, there is no reason for this group to act conformally on the LCK form, so at this point we have to construct, by averaging, a new LCK form, still compatible with the $\TT$-action. Finally, we are able to explicitly write down a toric Vaisman metric in the conformal class of the averaged metric. 

The rest of the paper is organised as follows: in Section 2 we speak briefly of equivalent definitions of LCS/LCK manifolds and of properties of their infinitesimal automorphisms. In Section 3 we introduce Hamiltonian group actions in the LCS context. Section 4 puts together the results we use for our proof. In particular, we give a characterisation of when the action of a compact Lie group on a LCS manifold lifts to the minimal symplectic cover, and then show that the maximal dimension of a torus acting effectively on a $2n$-dimensional LCS manifold is $n+1$.

Finally, it is known that compact toric symplectic manifolds are actually toric \Ka, as a consequence of the Delzant classification. It is natural to ask if the analogous fact holds in our setting. It turns out that the answer is negative: in Section 6 we show, by exhibiting an example, that the class of compact toric LCS manifolds strictly contains the compact toric LCK manifolds.

\medskip
\textbf{Acknowledgements.} I thank my PhD advisor Andrei Moroianu for introducing me to the problem and for all the useful suggestions.

\section{Preliminaries on LCS and LCK Geometry}

\subsection{Definitions and Notations}

Let $M$ be a differential manifold.

\begin{definition}
A non-degenerate 2-form $\Omega\in\Omega^2_M$ is called \textit{locally conformally symplectic} or \textit{LCS} if there exists a closed non-exact 1-form $\theta\in\Omega^1_M$, called the \textit{Lee form}, such that:
\begin{equation}\label{Lee}
d\Omega=\theta\wedge\Omega.\end{equation}
Moreover, if there exists an integrable complex structure $J$ with respect to which $\Omega$ becomes a positive $(1,1)$-form, i.e. such that $g(\cdot,\cdot):=\Omega(\cdot,J\cdot)$ is a $J$-invariant Riemannian metric, then $\Omega$ is caled \textit{locally conformally \Ka} or \textit{LCK}. 
\end{definition} 

Equivalently, $\Omega$ is LCS (or LCK respectively) if and only if there exists a covering of $M$ with open sets $\{U_\al\}_{\al\in I}$ such that $\Omega$ restricted to each of them is conformal to a symplectic (respectively \Ka) form:
\[\Omega|_{U_\al}=e^{\phi_\al}\Omega_\al \text{ with } d\Omega_\al=0 \]
where $\phi_\al\in\ce(U_\al)$ and $\Omega_\al\in\Omega^2_M(U_\al)$. Moreover, we have $\theta|_{U_\al}=d\phi_\al$ and the fact that $\theta$ is not (globally) exact corresponds to $(M,\Omega)$ being non conformal to a symplectic (or \Ka) manifold. Also, it can be seen directly that if a form $\Omega$ is LCS or LCK, then for any $u\in\ce(M)$, the form $\Omega_u:=e^u\Omega$ is also LCS or LCK with corresponding Lee form $\theta_u=\theta+du$, thus these notions are conformal in essence. We will denote by $[\Omega]:=\{\Omega_u|u\in\ce(M)\}$ the conformal class of $\Omega$.

\begin{remark} 
This notion is only interesting for us on manifolds of real dimension at least 4. Indeed, on manifolds of dimension 2, any  2-form is automatically closed, hence LCS forms coincide with symplectic forms. On the other hand, in dimension greater than 2, any 1-form $\theta$ verifying \eqref{Lee} is uniquely determined by the LCS form $\Omega$.
\end{remark}

\begin{definition}
A differential manifold endowed with a conformal class of a LCS form $(M, [\Omega])$ is called a \textit{locally conformally symplectic} or \textit{LCS manifod}. A complex manifold endowed with a conformal class of a compatible LCK form $(M,J,[\Omega])$ is called a \textit{locally conformally \Ka\ } or \textit{LCK manifold}. 
\end{definition}

\begin{remark}
Our definition of a LCS/LCK form corresponds to what is usually called a \textit{strict LCS/LCK form}, and for the classical definition, the class of LCS/LCK forms contains the globally conformally symplectic/\Ka\ forms. However, from certain points of view, the category of conformally symplectic manifolds and that of (strict) LCS manifolds behave differently, and for our purposes we are only interested in the second one. In particular, in the complex setting, there is a theorem of I.Vaisman (\cite{va0}) stating that a LCK form on a complex manifold (M,J) is globally conformally \Ka\ if and only if the manifold admits some \Ka\ metric. Thus for our definition, the class of LCK manifolds is disjoint form the class of \Ka ian manifolds.   
\end{remark}

Let $(M,[\Omega])$ be a LCS manifold. On the universal cover $\pi_{\tilde{M}}:\tilde{M}\rightarrow M$, $\pi_{\tilde{M}}^*\theta=d\phi$ is exact and hence $\Omega_0:=e^{-\phi}\pi_{\tilde{M}}^*\Omega\in\Omega^2_{\tilde M}$ is a symplectic form. Since $\pi_{\tilde{M}}^*\theta$ is $\pi_1(M)$-invariant, we have that $\gamma^*\phi-\phi$ is constant for any $\gamma\in\pi_1(M)$, hence we have a group morphism 
\begin{align*}
\rho:\pi_1(M)&\rightarrow(\RR,+)\\
 \gamma&\mapsto\gamma^*\phi-\phi.\end{align*}
 We also have that $\pi_1(M)$ acts on $\Omega_0$ by homotheties:
 \[\gamma^*\Omega_0=e^{-\rho(\gamma)}\Omega_0.\]
 The data $(\tilde M, \pi_1(M), \rho, \Omega_0)$ completely determines the LCS manifold $(M, [\Omega])$. Actually, in order to get a symplectic manifold, one need not consider the universal cover of $M$, but only its minimal cover with respect to which $\theta$ becomes exact. This is precisely $\hat{M}:=\tilde M/\K\rho$ and its deck group over $M$ is $\Gamma=\pi_1(M)/\K\rho\cong \I\rho$, which is a free abelian subgroup of $(\RR,+)$, hence isomorphic to $\ZZ^k$ for some $k$. Of course, the sympectic form of $\tilde M$ descends to $\hat M$, and we will denote it also by $\Omega_0$. 
 
 Note that the de Rham class of the Lee form $[\theta]_{dR}\in H^1_{dR}(M,\RR)$ is invariant under a conformal change of the LCS form $\Omega$, hence also the minimal cover $\hat M$ with the corresponding symplectic form $\Omega_0$, up to multiplication by positive constants, depends only on the conformal class $[\Omega]$. Also note that if $\Omega$ is a LCK form with respect to a complex structure $J$, then the sympectic form $\Omega_0$ is \Ka\ with respect to the pull-back complex structure. 

For a given LCS form $\Omega$ with its Lee form $\theta$, we have the twisted differential $d^\theta:=d-\theta\wedge\cdot$ which verifies $d^\theta\Omega=0$ and $d^\theta\circ d^\theta=0$. There is a result specific to the LCS geometry which is not valid in conformal symplectic geometry, that will be useful to us:
\begin{lemma}(\cite{va1},\cite{mmp})\label{injective}
 The map $d^\theta:\ce(M)\rightarrow \Omega^1(M)$ is injective on a LCS manifold $(M,\Omega)$.
\end{lemma}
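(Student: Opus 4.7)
The plan is to lift the equation $d^\theta f = 0$ to the minimal cover $\hat M$, where $\theta$ becomes exact, and exploit the equivariance of the pullback of $f$ under the deck group $\Gamma$.

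First, observe that $d^\theta f = df - f\theta$, so the hypothesis $d^\theta f = 0$ reads $df = f\theta$. Pulling back to $\hat M$, and writing $\hat f = \pi_{\hat M}^* f$ and $\pi_{\hat M}^* \theta = d\phi$ (where $\phi$ is the function descending from the universal cover, well-defined on $\hat M$ because $\ker \rho$ acts trivially on $\phi$), we get
\[
d\hat f = \hat f \, d\phi.
\]
Equivalently, $d(e^{-\phi}\hat f) = e^{-\phi}(d\hat f - \hat f \, d\phi) = 0$. Since $\hat M$ is connected, this gives $\hat f = c\, e^\phi$ for some constant $c \in \RR$.

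Next I use the equivariance. By construction, $\hat f$ descends from $M$, so it is $\Gamma$-invariant: $\gamma^* \hat f = \hat f$ for every $\gamma \in \Gamma$. On the other hand, by the definition of $\rho$, we have $\gamma^*\phi = \phi + \rho(\gamma)$, hence
\[
\gamma^* \hat f = c\, e^{\phi + \rho(\gamma)} = e^{\rho(\gamma)}\, \hat f.
\]
Comparing the two expressions, either $c = 0$ or $e^{\rho(\gamma)} = 1$ for every $\gamma \in \Gamma$. The latter would mean $\rho \equiv 0$, i.e.\ that $\theta$ is exact on $M$, contradicting the definition of a (strict) LCS manifold. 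Therefore $c = 0$, and so $\hat f \equiv 0$, which gives $f \equiv 0$.

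There is no real obstacle here; the only subtle point is the use of strictness, namely that $\theta$ is assumed to be non-exact, which guarantees the non-triviality of $\rho$ and hence the existence of a $\gamma \in \Gamma$ with $e^{\rho(\gamma)} \neq 1$. This is precisely where the argument fails in the merely conformally symplectic case, consistent with the remark that injectivity of $d^\theta$ is a feature of strict LCS (as opposed to conformally symplectic) geometry.
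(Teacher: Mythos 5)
Your proof is correct. Note that the paper itself does not prove this lemma; it only cites \cite{va1} and \cite{mmp}. The standard argument in those references is local: if $df=f\theta$, then on each chart where $\theta=d\phi_\al$ the function $e^{-\phi_\al}f$ is locally constant, so the zero set of $f$ is open as well as closed; hence either $f\equiv 0$ or $f$ is nowhere zero, and in the latter case $\theta=d\ln|f|$ would be exact, contradicting strictness. Your version globalises this by passing to the minimal cover $\hat M$, where the same computation gives $\hat f=c\,e^\phi$ and the $\Gamma$-equivariance forces $c=0$; the two arguments are essentially equivalent, and you correctly isolate the role of strictness (non-triviality of $\rho$, equivalently non-exactness of $\theta$) and of connectedness of $\hat M$. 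No gaps.
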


 In LCK geometry, there is a special class of manifolds that behaves particularly nicely, characterised by a metric property:
\begin{definition}
 Let $(M,J,\Omega)$ be a LCK manifold with corresponding metric $g$ and Lee form $\theta$. The metric $g$ is called \textit{Vaisman} if the form $\theta$ is parallel with respect to the Levi-Civita connection corresponding to $g$.
\end{definition}

Vaisman manifolds are closely related to Sasaki manifolds: the universal cover of a Vasiman manifold with its \Ka\ metric is isometric to the \Ka\ cone over a Sasaki manifold. We recall that a Sasaki manifold $(S,g_S,\tilde J)$ is a Riemannian manifold $(S,g_S)$ together with a complex structure $\tilde J$ on $S\times \RR$ with respect to which the cone metric $g_K:=e^{-2t}(g_S+dt^2)$ is \Ka, and the homotheties $\psi_s(z,t)=(z,t+s)$, $s\in\RR$ are holomorphic.

We will not insist at all on the properties of Vaisman manifolds, but invite the reader to consult \cite{do} and the references therein. 

 \begin{remark}
 From now on, for a LCS manifod $(M,\Omega)$ we will always use the notations $\theta, \hat M, \Gamma, \rho, \phi$ or $\Omega_0$ to denote the uniquely associated objects as seen above, without redefining them. Also, we will always suppose, unless otherwise stated, that the LCS/LCK manifolds are compact connected of real dimension at least 4.
 \end{remark}

\subsection{Infinitesimal Automorphisms}\label{sec}

In this section we will take a closer look at the Lie algebra of infinitesimal automorphisms of LCS or LCK manifolds, and will distinguish a special subalgebra that will play a particular role.
 
For a LCS manifold $(M, [\Omega])$, the automorphism group $\Aut(M,[\Omega])$ is formed by all the conformal diffeomorphisms $\Phi:M\rightarrow M$, $\Phi^*\Omega\in[\Omega]$. By obvious analogy, for a LCK manifold $(M, J, [\Omega])$, the automorphism group $\Aut(M, J, [\Omega])$ is given by all the conformal biholomorphisms. Denote by $\aut(M, [\Omega])$ and by $\aut(M, J, [\Omega])$ respectively the corresponding Lie algebras of infinitesimal automorphisms. 

First of all, note that $X\in \aut(M, [\Omega])$ means $\LL_X\Omega=f_X\Omega$. This implies $(f_X-\theta(X))\Omega=d^\theta(\iota_X\Omega)$. Hence $d^\theta((f_X-\theta(X))\Omega)=0$, or also $(df_X-d(\theta(X)))\wedge\Omega=0$ and since we are working under the supposition that $\dim M\geq 4$, it follows that $\theta(X)-f_X=c_X\in\RR$. By straightforward computations it can be seen that the constants $c_X$ are conformally invariant. Hence we have a linear map: 
\begin{align}\label{mapl}
\begin{split}
l:\aut(M,[\Omega])&\rightarrow \RR \\
X&\mapsto c_X=\theta(X)-f_X\end{split}\end{align}
which actually can be seen to be a Lie algebra morphism (see \cite{va1} for all the details). Consider the kernel of this map, which is also conformally invariant:
\[\aut'(M,[\Omega]):=\{X\in\Gamma(TM)|\LL_X\Omega=\theta(X)\Omega\}.\]
We will call elements of this subalgebra horizontal or special conformal vector fields. 

On the other hand, for $X\in\Gamma(TM)$ and $\hat X:=\pi^*X\in\Gamma(T\hat M)$ its lift to $\hat M$, we have the following formula:
\begin{equation}\label{liftLie}
\LL_{\hat X}\Omega_0=e^{-\phi}\pi^*(\LL_X\Omega-\theta(X)\Omega).
\end{equation}
In particular, for $X\in\aut(M,[\Omega])$ we have $\LL_{\hat X}\Omega_0=-l(X)\Omega_0$.

Also, under the hypothesis $\dim M \geq 4$, every conformal automorphism of the symplectic form $\Omega_0$ is in fact a homothety:
\[\LL_X\Omega_0=f\Omega_0 \Rightarrow 0=d(\LL_X\Omega_0)=df\wedge\Omega_0\Rightarrow df=0. \]
Hence we have proved the following lemma, which emphasises the role of $\aut'(M,[\Omega])$:

\begin{lemma}
We have a natural isomorphism between  $\aut(M,[\Omega])$ and $\aut(\hat M,[\Omega_0])^\Gamma$ given by $\pi^*$. In particular, under this isomorphism, $\aut'(M,[\Omega])$ is in bijection with $\aut(\hat M, \Omega_0)^\Gamma$, the Lie algebra of $\Gamma$-invariant infinitesimal symplectomorphisms of $\Omega_0$.
\end{lemma}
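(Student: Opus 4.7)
The plan is to verify that the natural bijection between vector fields on $M$ and $\Gamma$-invariant vector fields on $\hat M$, given by lifting through the covering map $\pi : \hat M \to M$, restricts to the claimed isomorphisms at the level of infinitesimal automorphisms. The key input is formula \eqref{liftLie} together with the observation made just above that any conformal symplectomorphism of $\Omega_0$ is in fact a homothety when $\dim M \geq 4$.

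First I would record the basic set-up: since $\pi$ is a Galois covering with deck group $\Gamma$, the lift $\pi^* : \Gamma(TM) \to \Gamma(T\hat M)^\Gamma$ is a Lie algebra isomorphism (brackets are preserved because $\pi$ is a local diffeomorphism, and the inverse is given by descending a $\Gamma$-invariant field). Thus it suffices to check that $\pi^*$ matches the subalgebras of interest.

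For the forward direction, let $X \in \aut(M,[\Omega])$, so $\LL_X \Omega = f_X \Omega$ with $f_X - \theta(X) = -c_X = -l(X) \in \RR$. Plugging into \eqref{liftLie} gives
\[
\LL_{\hat X}\Omega_0 = e^{-\phi}\pi^*\bigl((f_X - \theta(X))\Omega\bigr) = -l(X)\, e^{-\phi}\pi^*\Omega = -l(X)\,\Omega_0,
\]
so $\hat X \in \aut(\hat M,[\Omega_0])^\Gamma$, and it acts as a homothety. Conversely, suppose $Y \in \aut(\hat M,[\Omega_0])^\Gamma$ with $\LL_Y \Omega_0 = h \Omega_0$ for some $h \in \ce(\hat M)$. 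Differentiating and using $d\Omega_0 = 0$ gives $dh \wedge \Omega_0 = 0$, which forces $dh = 0$ by the observation above; in particular $h$ is a constant $c \in \RR$, hence trivially $\Gamma$-invariant. By the basic bijection, $Y = \hat X$ for a unique $X \in \Gamma(TM)$; reading \eqref{liftLie} backwards then yields $\LL_X \Omega - \theta(X)\Omega = c\, \Omega$, so $X \in \aut(M,[\Omega])$ with $l(X) = -c$.

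The second assertion is immediate from the formula $\LL_{\hat X}\Omega_0 = -l(X)\Omega_0$ derived above: the condition $X \in \aut'(M,[\Omega])$ is by definition $l(X)=0$, which is equivalent to $\LL_{\hat X}\Omega_0 = 0$, i.e.\ $\hat X \in \aut(\hat M,\Omega_0)^\Gamma$. No serious obstacle is anticipated; the whole argument is a direct translation through \eqref{liftLie}, with the only subtle point being the appeal to $\dim M \geq 4$ to promote a conformal factor $h$ on $\hat M$ to a global constant, which is precisely what identifies $c_X$ on $M$ with the homothety constant of $\hat X$ on $\hat M$.
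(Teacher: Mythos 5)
Your argument is correct and is essentially the paper's own proof: the lemma there is deduced from formula \eqref{liftLie} together with the preceding observation that, for $\dim M\geq 4$, every conformal infinitesimal automorphism of the symplectic form $\Omega_0$ is a homothety, exactly as you do. The only difference is that you spell out the converse direction (descending a $\Gamma$-invariant conformal field and reading \eqref{liftLie} backwards), which the paper leaves implicit.
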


\begin{definition}
A LCS form $(\Omega,\theta)$ is called \textit{exact} if $\Omega$ is $d^\theta$ exact, i.e. $\Omega=d^\theta\eta$, for some form $\eta\in\Omega^1_M$. Remark that, in this case, for any other LCS form in the same conformal class $\Omega_u=e^u\Omega$ with Lee form $\theta_u=\theta+du$ we have $\Omega_u=d^{\theta_u}(e^u\eta)$. Hence we can call an LCS manifold $(M,[\Omega])$ \textit{LCS exact} if some, and hence any representative $\Omega$ is exact.
\end{definition}

The map $l$ defined in \eqref{mapl} is studied by I.Vaisman in \cite{va1}, and in particular its restriction to $\aut(M, \Omega)$. He names LCS manifolds of the first kind those for which this restriction is not identically zero and studies their structure. As noted in \cite{va1}, being of the first kind is not a conformally invariant notion. However, we have the following result in the conformal setting:

\begin{lemma}\label{firstkind}
The map $l$ is surjective iff $(M,[\Omega])$ is LCS exact.
\end{lemma}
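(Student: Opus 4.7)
The plan is to reduce both directions to the single identity
\[
-l(X)\,\Omega \;=\; d^\theta(\iota_X\Omega),
\]
valid for every $X\in\aut(M,[\Omega])$, which was already derived in the excerpt from $\LL_X\Omega = d\iota_X\Omega + \iota_X(\theta\wedge\Omega)$ together with $\LL_X\Omega = f_X\Omega$. Once this identity is in hand, the lemma becomes essentially a tautology; the non-degeneracy of $\Omega$ supplies the bijection between vector fields and $1$-forms needed to go back and forth.

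For the direction ``$l$ surjective $\Rightarrow$ $[\Omega]$ exact'': choose $X\in\aut(M,[\Omega])$ with $l(X)=-1$. The displayed identity then reads $\Omega = d^\theta(\iota_X\Omega)$, exhibiting $\Omega$ as $d^\theta$-exact with primitive $\eta = \iota_X\Omega$. For the converse, suppose $\Omega = d^\theta\eta$ for some $\eta\in\Omega^1_M$. Since $\Omega$ is non-degenerate, there is a unique vector field $X$ with $\iota_X\Omega = -\eta$. Computing via the identity, $\LL_X\Omega - \theta(X)\Omega = d^\theta(\iota_X\Omega) = -d^\theta\eta = -\Omega$, so $\LL_X\Omega = (\theta(X)-1)\Omega$. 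This shows $X\in\aut(M,[\Omega])$ with $f_X = \theta(X)-1$, hence $l(X) = \theta(X)-f_X = 1$; as $l$ is linear (indeed a Lie algebra morphism into $\RR$), $l$ is surjective.

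I do not expect any real obstacle here: the only point that needs the standing hypothesis $\dim M\ge 4$ is the well-definedness of $l$ itself, which was already established in the excerpt. Everything else is formal manipulation together with non-degeneracy of $\Omega$ to invert $\iota_\cdot\Omega$.
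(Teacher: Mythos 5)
Your proof is correct and follows essentially the same route as the paper: both directions hinge on the Cartan-formula identity $\LL_X\Omega=d^\theta(\iota_X\Omega)+\theta(X)\Omega$, used first for an automorphism with $l(X)=\mp 1$ to produce the primitive $\pm\iota_X\Omega$, and then for the vector field $X$ defined by $\iota_X\Omega=-\eta$ to show $\LL_X\Omega=(\theta(X)-1)\Omega$ and hence $l(X)=1$. The only difference from the paper is an immaterial sign convention in the first direction.
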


\begin{proof}
First of all fix $\Omega\in[\Omega]$ a LCS form. Suppose $l\not\equiv 0$ and choose $B\in\aut(M,[\Omega])$ such that $l(B)=1$. Then we have: 
\begin{align*}
\theta(B)\Omega-\Omega=\LL_B\Omega=d\iota_B\Omega+\theta(B)\Omega-\theta\wedge \iota_B\Omega\end{align*}
hence:
\[\Omega=d^\theta(-\iota_B\Omega).\]
Conversely, suppose $\Omega=d\eta-\theta\wedge\eta$. Define $B\in\Gamma(TM)$ by: $\iota_B\Omega=-\eta$. We compute:
\begin{align*}
\LL_B\Omega=&d\iota_Bd\eta-d(\theta(B)\eta)+\iota_B(\theta\wedge d\eta)=\\
=&d(\theta(B)\eta-\eta)-d(\theta(B)\eta)+\theta(B)d\eta-\theta\wedge(\theta(B)\eta-\eta)=\\
=&(\theta(B)-1)\Omega.
\end{align*}
Hence $B\in\aut(M,[\Omega])$ and $l(B)=1$.
\end{proof}

\section{Twisted Hamiltonian Vector Fields}

In this section, we study the corresponding notions of Hamiltonian vector field and Hamiltonian group action to the LCS context. The definitions, as presented, were introduced by I.Vaisman in \cite{va1}, where one can also see a number of reasons for why these are the natural analogues to the ones from the symplectic world.

\begin{definition}
A vector field $X\in\Gamma(TM)$ on a LCS manifold $(M,\Omega,\theta)$ is called \textit{twisted Hamiltonian} if there exits a function $f\in\ce(M)$ such that $\iota_X\Omega=d^\theta f$.
\end{definition}

\begin{remark}
Although it is not apparent from the definition, the above notion is actually conformally invariant. Indeed, if $X=X_f$ is a twisted Hamiltonian vector field for $\Omega$ with corresponding function $f\in\ce(M)$ and $\Omega':=e^u\Omega$ is another conformal form with corresponding Lee form $\theta'=\theta+du$, then we have:
\begin{equation}\label{hamconf}
\iota_X\Omega'=e^u(df-\theta f)=d^{\theta'}(fe^u).
\end{equation}
\end{remark}

\begin{remark}\label{Poisson} As in the symplectic setting, an LCS form $\Omega$ defines on $\ce(M)$ a Poisson bracket:
\[ \{f,g\}:=\Omega(X_g,X_f)  \quad \forall f,g\in\ce(M) \]
and by straightforward calculations it can be seen that $X_{\{f,g\}}=[X_f,X_g]$. Hence the set of twisted Hamiltonian vector fields $\ham(M, [\Omega]):=\{X\in\Gamma(TM)| \exists f\in\ce(M) \ \iota_X\Omega=d^\theta f\}$ forms a Lie subalgebra of $\Gamma(TM)$. 
\end{remark} 

\begin{remark}
Actually, $\ham(M, [\Omega])\subset\aut'(M,[\Omega])$. Indeed, for $X=X_f\in\ham(M, [\Omega])$ we have:
\begin{align}\label{cia}
\LL_{X_f}\Omega=\iota_{X_f}d\Omega+d\iota_{X_f}\Omega=\iota_{X_f}(\theta\wedge\Omega)+d(df-\theta f)=\theta(X_f)\Omega.
\end{align}
\end{remark}

\begin{remark}
The pull-back morphism $\pi^*$ establishes an injection between $\ham(M, [\Omega])$ and the Lie algebra of Hamiltonian vector fields of the symplectic form on the minimal cover $\ham(\hat M,\Omega_0)$. Indeed, if $X=X_f\in\ham(M, \Omega)$ and $\hat X=\pi^* X$ is the pull-back vector field to $\hat M$, by writing $\Omega_0=e^{-\phi}\pi^*\Omega$ we have:
\begin{align*}
\iota_{\hat X}\Omega_0=e^{-\phi}(d\pi^*f-\pi^*fd\phi)=d(e^{-\phi}\pi^*f).
\end{align*}
\end{remark}

For $\Omega\in[\Omega]$, define the map $A^\Omega:\ce(M)\rightarrow \ham(M, [\Omega])$ by sending a function $f$ to its corresponding Hamiltonian vector field $X_f$ with respect to $\Omega$. By Remark \ref{Poisson}, if we consider on $\ce(M)$ the Lie algebra structure given by $\Omega$, $A^\Omega$ is a Lie algebra morphism. Note that since $d^\theta:\ce(M)\rightarrow \Omega^1_M$ is injective by Lemma \ref{injective}, also $A^\Omega$ is, hence $A^\Omega$ is  actually an isomorphism of Lie algebras. By \eqref{hamconf} we have $A^{e^u\Omega}(f)=A^\Omega(e^uf)$. 

\begin{definition}
Let $(M,[\Omega])$ be a LCS manifold. We say that an action of a Lie group $G$ is \textit{twisted Hamiltonian} if $\Lg:=\Lie(G)\subset\ham(M, [\Omega])$. 
\end{definition}

\begin{remark}\label{reference1}
If the Lie group $G$ is compact and acts conformally on $[\Omega]$, then we can find a LCS form in the given conformal class that is $G$-invariant. Indeed, take any LCS form $\Omega\in[\Omega]$. Then, for any $g$ in $G$, we have $g^*\Omega=e^{f_g}\Omega$, with $f_g\in\ce(M)$. Let $dv$ be a normalised Haar measure on $G$, and take $h:=\int_Gf_gdv(g)$, so that $\Omega^G:=\int_Gg^*\Omega dv(g)=e^h\Omega$. Then $\Omega^G\in[\Omega]$ is, by definition, a $G$-invariant LCS form with corresponding Lee form $\theta^G=dh+\theta$.
\end{remark}

Suppose that a compact Lie group $G$ has a twisted Hamiltonian action on the LCS manifold $(M, [\Omega])$. As soon as we choose a LCS form $\Omega\in[\Omega]$, there automatically exists a moment map, that is a Lie algebra morphism $\mu^\Omega:\Lg\rightarrow \ce(M)$ which is a section of $A^\Omega$. More precisely, for any $X\in\Lg$, if we denote by $\mu^\Omega_X:=\mu^\Omega(X)\in\ce(M)$, we have $\iota_X\Omega=d^\theta\mu^\Omega_X$. This comes from the fact that $A^\Omega$ is an isomorphism of Lie algebras, hence $\mu^\Omega=(A^\Omega)^{-1}|_\Lg$.

\begin{remark}
Similarly to the symplectic context, $\mu^\Omega$ can also be seen as an application $\mu^\Omega:M\rightarrow \Lg^*$ via $\langle \mu^\Omega,X\rangle=\mu^\Omega_X$. However, the fact that the first application is a Lie algebra morphism does not automatically imply that the second one is $G$-equivariant. In fact, being a Lie algebra morphism is conformally invariant, while being equivariant is not. Nonetheless, if $G$ is abelian, the equivariance of $\mu^\Omega$ is equivalent to $\Lg\subset\ker \theta$, or also to $\Omega$ being $G$-invariant.
\end{remark}

\begin{remark}\label{exacthamiltonian}
If $(M,[\Omega])$ is an exact LCS manifold, and $G$ is a compact Lie group that acts conformally on it such that $\Lg=\Lie(G)\subset\aut'(M,[\Omega])$, then this action is automatically twisted Hamiltonian. Indeed, as before, we can choose from the beginning, in the given conformal class, $\Omega=d^\theta\eta$ and $\theta$ $G$-invariant. Now define $\eta^G:=\int_Gg^*\eta dv(g)$. Then, since $\theta$ is $G$-invariant, we have:
\[d^\theta\eta^G:=\int_Gg^*(d\eta)dv(g)-\theta\wedge\int_Gg^*\eta dv(g)=\int_Gg^*(d\eta-\theta\wedge\eta)dv(g)=\int_G\Omega dv(g)=\Omega.\]
Hence, there exists a momentum map given by the $G$-invariant form $-\eta^G$. More precisely, we have, for any $X\in\Lg$:
\[\iota_X\Omega=\iota_Xd\eta^G+\theta\eta^G(X)=\LL_X\eta^G-d(\eta^G(X))+\theta\eta^G(X)=d^\theta(-\eta^G(X)).\] 
\end{remark}

\section{Torus actions on LCS manifolds}

In this section we assemble mostly already known results concerning tori actions that we will need in the sequel. In particular we will make use of the following well-known general result about the orbits of smooth actions of compact Lie groups, which is a consequence of the slice theorem. We refer the reader to \cite{br} or to \cite{dk} for a proof of the result and for a detailed presentation of the subject.

\begin{theorem}\label{maximalorbits}
Let $N$ be a connected smooth manifold and $G$ be a compact Lie group which acts effectively by diffeomorphisms on $N$. For any $x\in N$, denote by $G_x:=\{g\in G|g.x=x\}$ the stabiliser of $x$ in $G$, and let $r=\inf_{x\in N}\dim G_x$. Then $N_r:=\{x\in N| \dim G_x=r\}$, called the set of principal $G$-orbits, is a dense connected open submanifold of $N$, and $N-N_r$ is a union of submanifolds of codimension $\geq 2$. Moreover, if $G$ is abelian and acts effectively on $N$, then $r=0$. \end{theorem}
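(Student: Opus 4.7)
My plan is to use the slice theorem as the main technical tool. For the compact Lie group $G$ acting smoothly on $N$ and any $x\in N$, the slice theorem provides a $G_x$-invariant linear slice $V$ on which $G_x$ acts via the slice representation, and a $G$-invariant open neighbourhood of the orbit $G\cdot x$ equivariantly diffeomorphic to the twisted product $G\times_{G_x}V$. In this local model, the stabiliser of the point represented by $[g,v]$ equals $g(G_x)_v g^{-1}$. This identification drives the entire argument.

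First, openness of $N_r$ follows because nearby stabilisers are subgroups of conjugates of $G_x$, so $\dim G_y\leq\dim G_x$ in a neighbourhood of $x$; the function $x\mapsto\dim G_x$ is thus upper semicontinuous. For density, I would argue by contradiction: suppose some open set $U$ satisfies $\dim G_y>r$ for all $y\in U$. Upper semicontinuity yields a smaller open $W\subset U$ on which $\dim G_y$ equals a constant $k>r$; taking a slice at any $y_0\in W$ then forces $\dim(G_{y_0})_v=k=\dim G_{y_0}$ for every $v\in V$, which implies that $G_{y_0}^\circ$ acts trivially on the slice, hence on a non-empty open subset of $N$. Since the fixed-point set of a compact Lie group action is a closed submanifold, it must equal all of $N$; effectiveness then forces $G_{y_0}^\circ=\{e\}$, contradicting $k>0$.

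The codimension-$\geq 2$ claim is the most delicate technical point. Fix a $G_x$-invariant inner product on $V$ and stratify $N\setminus N_r$ by orbit type. Near a point $x$ belonging to a stratum with $\dim G_x=k>r$, the orbit direction in $G/G_x$ and the fixed subspace $V^{G_x^\circ}$ together span the stratum locally, while the normal direction in $V$ is a non-trivial orthogonal representation of the compact connected group $G_x^\circ$. Since no non-trivial homomorphism from a compact connected group into $O(1)$ exists, this normal representation must have real dimension at least $2$. Hence each stratum of $N\setminus N_r$ has codimension $\geq 2$ in $N$, and connectedness of $N_r$ follows at once: the complement of a locally finite union of closed submanifolds of codimension $\geq 2$ in the connected manifold $N$ is itself connected.

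For the abelian case, suppose $G$ is abelian and acts effectively, pick $x\in N_r$, and set $H:=G_x$ with identity component $H^\circ$. Commutativity makes conjugation trivial, so by the principal orbit type theorem (a consequence of the slice analysis above) all stabilisers of points in $N_r$ coincide with $H$; in particular $H^\circ$ fixes $N_r$ pointwise. By density of $N_r$ and continuity, $H^\circ$ fixes all of $N$, and effectiveness forces $H^\circ=\{e\}$, so $r=\dim H=0$.
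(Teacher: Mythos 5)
The paper does not actually prove this statement; it is quoted as a known consequence of the slice theorem with a pointer to Bredon and to Duistermaat--Kolk. Your strategy (slice theorem, upper semicontinuity of $x\mapsto\dim G_x$, orbit-type stratification) is the standard one from those references, but your density argument contains a genuine error. Having shown that $G_{y_0}^\circ$ fixes the slice $V$ pointwise, you conclude that it therefore fixes ``a non-empty open subset of $N$''. This does not follow: the slice has codimension $\dim(G\cdot y_0)=\dim G-k$ in $N$, so it is open only when the orbit is a point. The gap is fatal rather than cosmetic, because the chain of implications you use only invokes the hypothesis ``$\dim G_y\equiv k>0$ on an open set $W$'' and ends in a contradiction with effectiveness; note in particular that your final contradiction is with $k>0$, not with $k>r$. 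Applied to $SO(3)$ acting on $\mathbb{R}^3$ with $W=\mathbb{R}^3\setminus\{0\}$ and $k=r=1$, your argument would show that the action is not effective, which is absurd: there $G_{y_0}^\circ\cong SO(2)$ does fix the slice (an arc of the axis through $y_0$) but fixes no open set, and indeed $r=1\neq 0$ for this effective non-abelian action. The density of $N_r$ genuinely requires more work, e.g.\ the induction on $\dim N$ of Bredon (Thm.\ IV.3.1), in which one applies the inductive hypothesis to the $G_{y_0}$-action on the unit sphere of the slice.

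The codimension-$\geq 2$ step inherits the same problem: you describe the orthogonal complement of $V^{G_x^\circ}$ in $V$ as ``a non-trivial orthogonal representation of $G_x^\circ$'', but the fact that $G_x^\circ$ acts non-trivially on the slice at a singular point is exactly what the density statement asserts (if it acted trivially, the whole tube would lie in $N\setminus N_r$), so this part presupposes density rather than giving an independent proof of it. The count itself --- a real representation of a compact connected group with no non-zero fixed vector has dimension at least $2$, since there is no non-trivial homomorphism to $O(1)$ --- is correct, as is the deduction of connectedness of $N_r$ from the codimension bound. In the abelian case, your claim that ``all stabilisers of points in $N_r$ coincide with $H$'' is too strong: exceptional orbits lie in $N_r$ and may have strictly larger (finite) stabilisers, as for the rotation action of $S^1$ on the M\"obius band. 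What you actually need, and what the principal orbit theorem gives for abelian $G$, is that every stabiliser contains the principal stabiliser $H$; then $H$ fixes all of $N$ and effectiveness forces $H=\{e\}$, so $r=0$. That part is fine once the principal orbit theorem is correctly established.
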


\begin{remark}\label{freeM0}
In general, if $G=\TT$ is the compact torus and acts effectively on $N$ as in the above theorem, the stabilisers $G_x$ need not be connected. However, if in addition we have a symplectic form on $N$ which is preserved by $G$ and such that the orbits of the action are isotropic, then indeed all the stabilisers are connected tori. For a proof of this, see for instance \cite[Lemma~6.7]{be}. In particular, the set $N_0$ is acted upon freely. As we will see soon, cf. Proposition \ref{maxdim}, this hypothesis will be verified in our context.
\end{remark}

Since we will need to switch between the compact LCK manifold and the non-compact \Ka\ covering,  we need to know what happens with a given torus action in the process. Its behaviour actually does not involve the complex structure of the manifold, so next we give an analogue of \cite[Proposition~4.4]{mmp} in the LCS setting, when the group is a torus. The above result can be shown to hold for any compact Lie group, following the arguments of \cite{mmp} and using the structure theorem of compact Lie groups.

\begin{proposition}\label{liftaction}
Let $(M,[\Omega])$ be a LCS manifold and $\TT$ be a compact torus acting on $M$ by conformal automorphisms. Then the action of $\TT$ lifts to the minimal cover $\hat M$ iff $\Lie(\TT)=\Lt\subset\aut'(M,[\Omega])$.
\end{proposition}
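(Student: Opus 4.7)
My strategy is to exploit formula \eqref{liftLie}, which for $X\in\aut(M,[\Omega])$ reduces to $\LL_{\hat X}\Omega_0=-l(X)\Omega_0$, together with two structural facts: the deck group $\Gamma$ acts on $\Omega_0$ by homotheties $\gamma^*\Omega_0=e^{-\rho(\gamma)}\Omega_0$, and $\rho$ is injective on $\Gamma$ by the very construction of the minimal cover. Note that the condition $\Lt\subset\aut'(M,[\Omega])$ is exactly $l|_\Lt\equiv 0$, i.e., each $\hat X:=\pi^*X$ preserves $\Omega_0$.

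For the direct implication, suppose we are given a smooth lift $\hat\psi:\TT\times\hat M\to\hat M$ of the $\TT$-action. For any $X\in\Lt$, the one-parameter subgroup $t\mapsto\hat\psi(\exp(tX),\cdot)$ has an infinitesimal generator $Y$ that covers $X$, and uniqueness of lifts of vector fields along the covering $\pi$ forces $Y=\hat X$. If $X$ lies in the integral lattice $\Lt_\ZZ:=\K(\exp:\Lt\to\TT)$, then $\hat\psi(\exp X,\cdot)=\hat\psi(e,\cdot)=\id_{\hat M}$, so the time-$1$ flow $\hat\phi^{1}_X$ of $\hat X$ is the identity. Integrating $\LL_{\hat X}\Omega_0=-l(X)\Omega_0$ yields $(\hat\phi^{s}_X)^*\Omega_0=e^{-sl(X)}\Omega_0$, and evaluating at $s=1$ forces $l(X)=0$. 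Since $\Lt_\ZZ$ spans $\Lt$ over $\RR$ and $l$ is $\RR$-linear, $l|_\Lt\equiv 0$.

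For the converse, assume $\Lt\subset\aut'(M,[\Omega])$. Each $\hat X=\pi^*X$ is complete (lifts of integral curves exist for all time because $X$ is complete on $M$ and $\pi$ is a covering) and preserves $\Omega_0$. Since $\Lt$ is abelian and lifts commute, these fields integrate to a smooth $\Lt$-action $\hat\Psi:\Lt\times\hat M\to\hat M$ which covers the composition of $\exp:\Lt\to\TT$ with the action on $M$. I then need to show that $\hat\Psi$ factors through $\TT=\Lt/\Lt_\ZZ$. For any $X\in\Lt_\ZZ$, the diffeomorphism $\hat\Psi(X,\cdot)$ covers $\id_M$, hence is some deck transformation $\gamma_X\in\Gamma$; moreover it preserves $\Omega_0$, so
\[
\Omega_0=\hat\Psi(X,\cdot)^*\Omega_0=\gamma_X^*\Omega_0=e^{-\rho(\gamma_X)}\Omega_0,
\]
which forces $\rho(\gamma_X)=0$. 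Injectivity of $\rho|_\Gamma$ then yields $\gamma_X=\id_{\hat M}$, so $\hat\Psi$ descends to a $\TT$-action on $\hat M$ covering the original one.

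The delicate point throughout is passing from infinitesimal lifts of vector fields, which always exist, to a genuine global lift of the group action: a priori one could pick up nontrivial monodromy, valued in $\Gamma$, along loops in $\TT$. The hypothesis $l|_\Lt=0$ precisely eliminates this monodromy by trapping it in $\K(\rho|_\Gamma)$, which is trivial by the minimality of $\hat M$. This is the role played by the symplectic geometry in what is otherwise a purely differential-topological lifting problem.
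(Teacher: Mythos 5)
Your proof is correct and follows essentially the same route as the paper's: both directions hinge on the observation that the monodromy of the lifted action lands in $\Gamma$, acts on $\Omega_0$ by the homothety $e^{-\rho(\cdot)}$, and is therefore killed by the minimality of $\hat M$. The only cosmetic differences are that you work with the integral lattice of the full torus where the paper reduces to circle factors, and you integrate $\LL_{\hat X}\Omega_0=-l(X)\Omega_0$ explicitly where the paper argues via a critical point of the periodic function $c(t)$.
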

\proof
 
We can suppose that $\TT=S^1$, for otherwise we make use of the same argument for each generator of the $\TT$-action. Fix a LCS form $\Omega$. Denote by $X$ the generator of the infinitesimal action of $S^1$ on $M$, by $\hat X$ its lift to $\hat M$ and by $\Phi_t$ and $\hat\Phi_t$ their corresponding flows, so that $\Phi_0=\Phi_1=\id_M$. Then the action of $\TT$ lifts to $\hat M$ iff $\hat\Phi_t$ is periodic in $t$.
 
Suppose first that $\Lt\subset\aut'(M,[\Omega])$. Equation \eqref{liftLie} implies then that $\hat X\in\aut(\hat M,\Omega_0)$, hence $\{\hat\Phi_t\}_t$ are symplectomorphisms. On the other hand, $\hat\Phi_1$ is an element of $\Gamma$ since it covers the identity of $M$. Thus $\hat\Phi_1\in\K\rho=\{\id\}$ by the definition of the minimal cover.

Conversely, suppose $\hat\Phi_t$ is periodic in $t$. As we already saw, $\hat X$ acts by homotheties on $\Omega_0$, hence the exists a periodic $\ce$ function $c:\RR\rightarrow \RR$ such that $\hat\Phi_t^*\Omega_0=c(t)\Omega_0$. Moreover, we have, for any $t_1, t_2\in\RR$: 
\[c(t_1+t_2)\Omega_0=\hat\Phi_{t_1}^*(\hat\Phi_{t_2}^*\Omega_0)=\hat\Phi_{t_1}^*(c(t_2))\hat\Phi_{t_1}^*\Omega_0=c(t_2)c(t_1)\Omega_0. \]
Hence, for any $t\in\RR$:
\[\dot c(t)=\lim_{h\to 0}\frac{c(t+h)-c(t)}{h}=\lim_{h\to 0}\frac{c(t)c(h)-c(t)c(0)}{h}=c(t)\dot c(0).\]
On the other hand, since $c$ is periodic, it must have some critical point, implying that $\dot c(0)=0$. Therefore $\LL_{\hat X}\Omega_0=\dot c(0)\Omega_0=0$, or also, by \eqref{liftLie}, $X\in\aut'(M,[\Omega])$. 
\endproof

\begin{remark}\label{liftedaction}
Note that, in general, an action of a group $G$ on $\hat M$ descends to an action of $G$ on $M$ iff $G$ commutes with $\Gamma$. \end{remark}

\begin{corollary}
Any twisted Hamiltonian action of a compact torus $\TT$ on a LCS manifold $(M,[\Omega])$ lifts to a Hamiltonian action of $\TT$ to the minimal symplectic cover $(\hat M, \Omega_0)$.
\end{corollary}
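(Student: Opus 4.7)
The proof proceeds in two clean steps, essentially assembling results already collected in the preceding sections, so the plan is concise.

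First I would observe that the assumption that the $\TT$-action is twisted Hamiltonian means precisely $\Lt \subset \ham(M,[\Omega])$. By the remark immediately after Remark \ref{Poisson} (the computation in display \eqref{cia}), every twisted Hamiltonian vector field automatically lies in $\aut'(M,[\Omega])$. Therefore $\Lt \subset \aut'(M,[\Omega])$, and Proposition \ref{liftaction} applies generator by generator: the $\TT$-action lifts to an action of $\TT$ on the minimal symplectic cover $\hat M$, consisting (by Proposition \ref{liftaction}'s proof) of symplectomorphisms of $\Omega_0$.

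It remains to exhibit a moment map. For each $X \in \Lt$ the hypothesis gives $f_X \in \ce(M)$ with $\iota_X \Omega = d^\theta f_X$. Pulling back to $\hat M$, using $\Omega_0 = e^{-\phi}\pi^*\Omega$ and $d\phi = \pi^*\theta$, one obtains
\[
\iota_{\hat X}\Omega_0 = e^{-\phi}\pi^*(\iota_X\Omega) = e^{-\phi}\pi^*(df_X - f_X\,\theta) = d\bigl(e^{-\phi}\pi^*f_X\bigr),
\]
which is the calculation already recorded in the remark defining the injection $\ham(M,[\Omega]) \hookrightarrow \ham(\hat M,\Omega_0)$. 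So $\hat X$ is Hamiltonian with Hamiltonian function $\hat f_X := e^{-\phi}\pi^*f_X$, defining a linear map $\hat\mu : \Lt \to \ce(\hat M)$.

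Finally, I would note that $\hat\mu$ is a Lie algebra morphism: since $A^\Omega$ is an isomorphism of Poisson algebras and the assignment $f_X \mapsto \hat f_X$ intertwines the two Poisson brackets (as is routine from $\Omega_0 = e^{-\phi}\pi^*\Omega$), $\hat\mu$ is a section of $A^{\Omega_0}$ restricted to $\Lt$. Since $\TT$ is abelian, $\TT$-equivariance of $\hat\mu$ reduces to $\TT$-invariance of each $\hat f_X$, which holds because $\hat f_X$ is built from $\TT$-invariant data (one may ensure $\Omega$, and hence $f_X$, is $\TT$-invariant by the averaging in Remark \ref{reference1}, after which $\phi$ is also $\TT$-invariant since $\pi^*\theta = d\phi$ with $\theta$ invariant and $\hat M$ connected). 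There is no real obstacle here; the only point requiring a moment's care is the passage from $\Omega$ to a $\TT$-invariant representative so that the resulting $\hat f_X$ are honestly $\TT$-invariant, but this is taken care of precisely by Remark \ref{reference1}.
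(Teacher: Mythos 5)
Your proof is correct and follows essentially the same route as the paper: the inclusion $\Lt\subset\aut'(M,[\Omega])$ via \eqref{cia} together with Proposition \ref{liftaction} gives the lift, and the Hamiltonians $e^{-\phi}\pi^*\mu^\Omega_X$ are exactly the moment map the paper writes down. Your additional remarks on the Poisson-morphism property and $\TT$-equivariance are harmless extra detail beyond what the statement requires.
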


\begin{proof}
Indeed, by \eqref{cia}, $\Lt$ sits in $\aut'(M, [\Omega])$, so the $\TT$-action lifts to $\hat M$. Moreover, the lifted action is still Hamiltonian, since it admits the moment map $\hat \mu:\hat M\rightarrow \Lt^*$, $\hat \mu(\hat x)=e^{-\phi(\hat x)}\mu^\Omega(\pi(\hat x))$. Remark that we chose a form $\Omega\in[\Omega]$ in order to define $\hat\mu$, but actually $\hat\mu$ is conformally invariant. 
\end{proof}

Recall that, for a symplectic manifold, the maximal dimension of a torus acting symplectically and effectively on it is bounded from above only by the dimension of the manifold and, moreover, in many cases the orbits are not isotropic. The next proposition shows that things are different in the LCS setting. A variant of this result can again be found as Proposition 3.9 in \cite{mmp}. 

\begin{proposition}\label{maxdim}
Suppose that a real torus $\TT^m$ acts conformally end effectively on a LCS manifold $(M^{2n},[\Omega])$. Then $m\leq n+1$ and, moreover, if $\Lt=\Lie(\TT^m)\subset\aut'(M,[\Omega])$, then $m\leq n$ and the orbits are isotropic with respect to any representative in $[\Omega]$.
\end{proposition}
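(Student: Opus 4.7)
The plan is as follows. By Remark \ref{reference1}, we may replace $\Omega$ by a $\TT^m$-invariant LCS form in the same conformal class, so without loss of generality $\LL_X \Omega = 0$ for every $X \in \Lt$. Then $f_X = 0$, and the Lie algebra morphism $l|_\Lt$ from \eqref{mapl} reduces to $X \mapsto \theta(X)$. Set
\[ \Lt' := \Lt \cap \aut'(M,[\Omega]) = \ker(\theta|_\Lt), \]
a linear subspace of codimension at most one in $\Lt$, which coincides with all of $\Lt$ precisely when the extra hypothesis of the second half of the statement holds.

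I then show that the orbits of $\Lt'$ are $\Omega$-isotropic. For $X, Y \in \Lt'$, set $\omega := \Omega(X, Y) \in \ce(M)$. A direct Cartan-formula computation using the invariance $\LL_X \Omega = 0$, the commutativity $[X, Y] = 0$, the LCS identity $d\Omega = \theta \wedge \Omega$, and the vanishings $\theta(X) = \theta(Y) = 0$ yields
\[ d\omega = \omega \, \theta, \quad \text{i.e.,} \quad d^\theta \omega = 0. \]
Lemma \ref{injective} then forces $\omega \equiv 0$, so $\Omega(X,Y) = 0$ pointwise on all of $M$. Since multiplication by a positive function does not affect this vanishing, the isotropy is a property of the conformal class $[\Omega]$ rather than of the chosen representative.

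For the dimensional bound, $\TT^m$ is a compact abelian group acting effectively, so Theorem \ref{maximalorbits} guarantees a principal orbit with discrete (hence $0$-dimensional) stabiliser. At such a point $x$ the infinitesimal evaluation $\Lt \to T_x M$ is injective, hence so is its restriction to $\Lt'$; combined with isotropy, this realises $\Lt'$ as an isotropic subspace of the symplectic vector space $(T_x M, \Omega_x)$, forcing $\dim \Lt' \leq n$. Therefore $m = \dim\Lt \leq \dim\Lt' + 1 \leq n + 1$, and in the special case $\Lt \subset \aut'(M,[\Omega])$ one has $\Lt = \Lt'$, yielding $m \leq n$ together with isotropy of all $\Lt$-orbits. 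The key technical step is the Cartan calculation leading to $d^\theta \omega = 0$: the obstructing cross-terms $\theta(X) \iota_Y \Omega$ and $\theta(Y) \Omega$ cancel only after restricting to $\Lt'$, which is exactly what produces the $+1$ gap between the two bounds.
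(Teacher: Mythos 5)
Your proof is correct, and the central step is carried out by a genuinely different mechanism than in the paper. For the isotropy of the $\Lt'$-orbits, the paper lifts the vector fields to the minimal cover $\hat M$, uses $\iota_{[X,Y]}=\LL_X\iota_Y-\iota_Y\LL_X$ to show that $\Omega_0(\hat X,\hat Y)$ is a \emph{constant} $c$, and then kills $c$ by observing that $e^\phi c=\pi^*(\Omega(X,Y))$ would otherwise be $\Gamma$-equivariant but not $\Gamma$-invariant. You instead stay on $M$ with a $\TT$-invariant representative (legitimate by Remark \ref{reference1}, and harmless since $\aut'(M,[\Omega])$, the map $l$, and isotropy are all conformally invariant), derive $d^\theta\bigl(\Omega(X,Y)\bigr)=0$ by the Cartan computation, and invoke the injectivity of $d^\theta$ on functions (Lemma \ref{injective}). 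The two arguments encode the same obstruction --- non-exactness of $\theta$ --- but yours avoids the covering space entirely and is arguably cleaner; the paper's version has the side benefit of exhibiting the lifted fields as honest infinitesimal symplectomorphisms of $(\hat M,\Omega_0)$, which is reused later (e.g.\ in the proof of Theorem A and in Remark \ref{freeM0} on connectedness of stabilisers). The remaining ingredients coincide: your $\Lt'=\ker(l|_\Lt)$ is exactly the paper's splitting $\Lt=\RR B\oplus\Lt'$ with $l(B)=1$, and the dimension count via injectivity of the evaluation map $\Lt\to T_xM$ at a principal point (Theorem \ref{maximalorbits}, $r=0$ for effective abelian actions) is the same as the paper's injection of $M_0\times\Lt$ into $\T|_{M_0}$.
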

\begin{proof}
Denote by $\T\subset TM$ the distribution generated by $\TT^m$ on $M$, and by $\hat \T$ the one on $\hat M$. Suppose first that $\Lt\subset\aut'(M,[\Omega])$. By \eqref{liftLie} it follows that $\Gamma(\hat \T)\subset\aut(\hat M,\Omega_0)$. Hence, using the formula: 
\begin{equation}\label{formula}
\iota_{[X,Y]}=\LL_X\iota_Y-\iota_Y\LL_X\end{equation}
we have, for any $\hat X$ and $\hat Y$ in $\Gamma(\hat \T)$:
\begin{align*}
0=\iota_{[\hat X,\hat Y]}\Omega_0=\LL_{\hat X}\iota_{\hat Y}\Omega_0=d\iota_{\hat X}\iota_{\hat Y}\Omega_0+\iota_{\hat X}d\iota_{\hat Y}\Omega_0.
\end{align*}
But we also have:
\[d\iota_{\hat Y}\Omega_0=\LL_{\hat Y}\Omega_0=0\]
implying that $d(\Omega_0(\hat X,\hat Y))=0$, or also that $\Omega_0(\hat X,\hat Y)=c\in\RR$. It follows that $e^\phi c =\pi^*(\Omega(X,Y))$, and since $e^\phi$ is not $\Gamma$-invariant, $c=0$. Therefore, for any $\hat x\in\hat M$, $\hat \T_{\hat x}$ is isotropic with respect to $(\Omega_0)_{\hat x}$, so $\hat \T$ and also $\T$ have maximal rank at most $n$.

On the other hand, let $M_0\subset M$ be the dense open set composed by all the $m$-dimensional orbits, as in Theorem \ref{maximalorbits}.  Then $M_0\times \Lt$ injects into $\T|_{M_0}$ as a vector subbundle in a natural way, hence $m\leq n$. 

In the general case, if $\Lt\not\subset\aut'(M,[\Omega])$, then by \eqref{mapl} there exists $B\in\Lt-\aut'(M,[\Omega])$ such that $l(B)=1$. Then we have a splitting $\Lt=\RR B\oplus\Lt'$ with $\Lt'\subset\aut'(M,[\Omega])$ and by the above, $\Lt'$ has dimension at most $n$, hence the conclusion follows.\end{proof}

\begin{definition}
A LCS manifold $(M^{2n},[\Omega])$ is called \textit{ toric LCS} if the maximal compact torus $\TT^n$ acts effectively in a twisted Hamiltonian way on it. A LCK manifold $(M^{2n}, J, [\Omega])$ is called \textit{toric LCK} if $(M^{2n}, [\Omega])$ is toric LCS with respect to an action of $\TT^n$ which is moreover holomorphic.
\end{definition}

The first examples of toric LCS/LCK manifolds are given by the diagonal Hopf surfaces $\CC^2-\{0\}/\Gamma$ with the Vaisman metrics constructed in \cite{go} and with the standard torus action. For a detailed proof, see \cite{p} where toric LCK manifolds were first considered. See also \cite{mmp} for another construction of toric LCK manifolds out of toric Hodge manifolds.

\section{Proof of Theorem A}\label{proofA}

We are now ready to give the proof of the main result:

\begin{theoremA}
Let $(M,J,[\Omega])$ be a compact toric LCK manifold. Then there exists a LCK form $\Omega'$ (possibly nonconformal to $\Omega$) with respect to which the same action is still twisted Hamiltonian, and such that the corresponding metric $g'$ is Vaisman. 
\end{theoremA}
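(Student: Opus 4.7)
The plan is to follow the outline in the introduction: pass to the minimal Kähler cover $\hat M$, complexify the torus action to extract a one-parameter subgroup containing the deck transformations, average $\Omega_0$ against this flow to get a new form which transforms consistently under $\Gamma$, and finally rescale conformally to produce a Vaisman representative.

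First, I would lift the $\TT$-action to a Hamiltonian action on $(\hat M, \Omega_0)$ via the corollary to Proposition \ref{liftaction}, and extend it (by holomorphicity) to a holomorphic action of the complexified torus $\TT^c \cong (\CC^*)^n$. Since the original $\TT$-action has maximal rank, the principal $\TT$-orbits on $\hat M$ are $n$-dimensional, forcing $\TT^c$ to admit a connected open dense orbit $\hat M^\circ \subset \hat M$ on which it acts freely, so that $\hat M^\circ \cong \TT^c$ after fixing a base point. The deck group $\Gamma$ commutes with $\TT^c$ and thereby embeds as a subgroup of $\TT^c$. I would then choose a one-parameter subgroup $\{\lambda_t\}_{t \in \RR} \subset \TT^c$ containing a generator $\gamma_0$ of $\Gamma$, normalised so that $\lambda_1 = \gamma_0$ and hence $\gamma_0^*\Omega_0 = e^{-\rho_0}\Omega_0$.

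The flow $\lambda_t$ is holomorphic and commutes with $\TT$ and $\Gamma$, but does not preserve any conformal class a priori. The idea is to set
\[
\hat\Omega_0' := \int_0^1 e^{t\rho_0}\,\lambda_t^*\Omega_0 \, dt
\]
and to average simultaneously over $\TT$. Using $\lambda_{u+1} = \gamma_0\lambda_u$ and $\gamma_0^*\Omega_0 = e^{-\rho_0}\Omega_0$, the integrand $e^{u\rho_0}\lambda_u^*\Omega_0$ is $1$-periodic in $u$, which gives $\lambda_s^*\hat\Omega_0' = e^{-s\rho_0}\hat\Omega_0'$ for all $s \in \RR$. The form $\hat\Omega_0'$ is a positive integral of Kähler forms pulled back by biholomorphisms, hence a positive $(1,1)$-form; and it transforms under $\Gamma$ by the same cocycle as $\Omega_0$, so it descends to a $\TT$-invariant LCK form $\Omega'$ on $M$ in a possibly new conformal class. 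The $\TT$-action remains twisted Hamiltonian for $\Omega'$ by the conformal invariance of this condition.

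In the final step, let $\xi := \frac{d}{dt}\big|_{t=0}\lambda_t$ on $\hat M^\circ$. The equivariance of $\hat\Omega_0'$ gives $\mathcal L_\xi\hat\Omega_0' = -\rho_0\,\hat\Omega_0'$, so $\xi$ is a holomorphic homothety for the averaged Kähler structure, while $J\xi \in \Lt$ is consequently Killing for the $\TT$-invariant metric. This is precisely the data identifying $(\hat M^\circ, \hat\Omega_0')$ with an open dense subset of the Kähler cone over a Sasaki manifold $S$, with $\xi$ playing the role of $r\partial_r$. The Vaisman representative is then $\Omega'' := \norm{\xi}^{-2}\hat\Omega_0'$: a direct check shows it is $\Gamma$-invariant, hence descends to an LCK form on the open dense subset $\hat M^\circ/\Gamma \subset M$, whose corresponding metric pulls back to the product $dt^2 + g_S$ on $\RR \times S$ with $t = \log\norm{\xi}$, so the Lee form $-d\log\norm{\xi}^2$ is parallel. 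The hard part will be exactly this final step: verifying the smooth extension of $\Omega''$ across the singular strata $\hat M \setminus \hat M^\circ$ to a globally defined Vaisman form on $M$. A subsidiary technical point is that, when $\mathrm{rk}\,\Gamma > 1$, the choice of a one-parameter subgroup of $\TT^c$ containing $\Gamma$ is not automatic, and may require either an a priori argument that $\Gamma$ is cyclic on a compact toric LCK manifold, or an adaptation of the averaging to a higher-rank abelian subgroup of $\TT^c$ containing $\Gamma$.
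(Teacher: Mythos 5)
Your plan follows the proof given in Section \ref{proofA} almost step for step: complexification of the torus action, the dense free $\TT^c$-orbit on $\hat M$, the identification $\Gamma\subset\TT^c$, the extension of a primitive deck transformation to a one-parameter subgroup, averaging over it, and the final conformal rescaling by $\norm{\xi}^{-2}$ (the paper's Vaisman form $\Omega'=\frac{1}{f_0}dd^cf_0$ with $f_0=\norm{B}^2_{g_0}$ is exactly this rescaling). Your weighted average $\int_0^1e^{t\rho_0}\lambda_t^*\Omega_0\,dt$ upstairs is a clean equivalent of the paper's averaging of $e^h\Omega$ on $M$ in Lemma \ref{average2}. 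Still, two of your justifications fail as stated. First, the $\TT$-action does not remain twisted Hamiltonian for $\Omega'$ ``by conformal invariance'': $\Omega'$ lies in a \emph{different} conformal class, so conformal invariance of the notion says nothing here. One must instead observe that the averaged form is $d^{\theta^C}$-exact because $\theta^C(C)$ is a nonzero constant (Lemma \ref{firstkind}), and then invoke Remark \ref{exacthamiltonian}; alternatively, average the moment map upstairs with the same weight $e^{t\rho_0}$ and check $\Gamma$-equivariance of the result. Second, $J\xi\in\Lt$ is false in general: $\xi\in\Lt^c$ has both a $\Lt$- and an $i\Lt$-component, and you must first replace $\xi$ by a multiple of $\pi_{i\Lt}(\xi)$. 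This is harmless, since the $\Lt$-component preserves the averaged form and so the homothety property is retained, but it is a necessary step --- it is exactly the passage from $C$ to $B$ in the text.

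The step you single out as ``the hard part'' --- extending $\Omega''$ across $\hat M\setminus\hat M^\circ$ --- is in fact a non-issue, and the real content of the last step lies elsewhere. The field $B$ is the fundamental vector field of a subgroup of $\TT^c$ acting on all of $\hat M$, the averaged form is globally defined, and $f_0=\norm{B}^2>0$ everywhere because $\theta^C(B)$ is a nonzero constant; so $\frac{1}{f_0}\Omega_0$ needs no extension argument. What actually must be verified is: (i) $\Omega_0=dd^cf_0$, which follows from $\LL_B\Omega_0=\Omega_0$ and $\LL_{JB}\Omega_0=0$; (ii) $\gamma^*f_0=e^{-\rho(\gamma)}f_0$, so that $\frac{1}{f_0}\Omega_0$ descends to $M$; and (iii) $(\theta')^\#=-B$ is holomorphic and Killing, whence $\nabla\theta'=0$. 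Your final worry about $\mathrm{rk}\,\Gamma>1$ also dissolves: the one-parameter subgroup only needs to contain a single primitive element, and the descent of the averaged form works because all of $\Gamma\subset\TT^c$ commutes with $\lambda_t$, giving $\delta^*\hat\Omega_0'=e^{-\rho(\delta)}\hat\Omega_0'$ for every $\delta\in\Gamma$ (the fact that $\Gamma$ has rank one is proved separately in Lemma \ref{rankGamma} but is not needed for the construction).
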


\begin{proof}

Denote by $\TT$ the $n$-dimensional compact torus that acts on the LCK manifold as in the hypotheses of the theorem. Then the holomorphic action of $\TT$ naturally extends to an effective holomorphic action of the complexified torus $\TT^c=(\CC^*)^n$ on $M$. Indeed, on one hand the induced inclusion homomorphism $\tau:\Lt\rightarrow \aut(M, J)$ extends to a Lie algebra morphism $\tau:\Lie(\TT^c)=\Lt\otimes\CC\rightarrow\aut(M, J)$ by: 
\[\tau(\xi_1+i\xi_2)=\tau(\xi_1)+J\tau(\xi_2)=X_{\xi_1}+ JX_{\xi_2}.\] 
On the other hand, we have the Cartan decomposition $\TT^c=\TT\times (\RR_{>0})^n$ and $i\Lt\subset \Lt^c$ is isomorphic to $(\RR_{>0})^n$ under the exponential map. Hence, if $\xi_1,\ldots , \xi_n$ form a basis of the Lie algebra $\Lt$, then $JX_{\xi_1},\ldots , JX_{\xi_n}\in\aut(M,J)$, being complete vector fields, generate the effective holomorphic action of $(\RR_{>0})^n$ on $M$. 

Let $(\hat M, \hat J, \Omega_0)$ be the minimal \Ka\ cover of $(M, J, [\Omega])$ of deck group $\Gamma$.  The action of $\TT^c$ evidently lifts to $\hat M$, and $\TT$ also acts in a Hamiltonian way with respect to $\Omega_0$. Denote by $\hat\mu:\hat M\rightarrow \RR^n$ the moment map of this action, and let $\hat M_0\subset \hat M$ be the corresponding connected dense open set of principal $\TT$-orbits, as in Theorem \ref{maximalorbits}. Following the proof of Proposition \ref{maxdim}, the orbits of $\TT$ on $\hat M$ are isotropic, hence cf. Remark \ref{freeM0}, $\hat M_0$ coincides with the set of points of $\hat M$ on which $\TT$ acts freely.

\underline{Fact 1}: $\TT^c$ preserves $\hat M_0$ and acts freely on it.

By the above, $\TT^c=\TT\times(\RR_{>0})^n$ preserves $\hat M_0$ iff $\forall u\in (\RR_{>0})^n,$ $\forall \hat x\in\hat M_0$, $\forall t\in \TT-\{1\}$, $tu.\hat x\neq u.\hat x$. But this is obvious since $t$ and $u$  commute.

To show that the action of $\TT^c$ is free on $\hat M_0$, let $g\in \TT^c$ and $\hat x\in\hat M_0$ with $g.\hat x=\hat x$. With the above remarks on $\TT^c$, we have $g=tu$ with $t\in \TT$ and $u=\exp(i\xi),$ $\xi\in\Lt$. By letting $\hat y:=t.\hat x$, it follows that $u.\hat y=\hat x\in \TT\hat x=\TT\hat y$. Let $c:\RR\rightarrow \hat M$ be the curve $c(s)=\exp(is\xi).\hat y$. Since $\hat\mu^\xi$ is constant on the orbits of $\TT$, it follows that:
\begin{equation}\label{miu}
\hat\mu^\xi(c(0))=\hat\mu^\xi (\hat y)=\hat\mu^\xi (\hat x)=\hat\mu^\xi (c(1)).\end{equation} 
On the other hand, the vector field $\tau(i\xi)=JX_\xi$ is, by definition, the gradient of the Hamiltonian $\hat\mu^\xi$. So, if $\xi\neq 0$, then $\hat\mu^\xi$ would be strictly increasing along $c$, but this contradicts \eqref{miu}. Thus $\xi=0$ and we have $t.\hat x=\hat x$, implying again that $t$ is the trivial element in $\TT$, hence $g$ is the trivial element in $\TT^c$.

\underline{Fact 2}: $\TT^c$ acts  transitvely on $\hat M_0$.

For any $\hat x\in\hat M_0$, the map $\TT^c\rightarrow \hat M_0$, $g\mapsto g.\hat x$ is a holomorphic open embedding. Therefore, the connected open set $\hat M_0$ is a reunion of disjoint open orbits of $\TT^c$, hence it must contain (and be equal to) a sole orbit.

In conclusion, for any choice of a point $\hat x_0\in\hat M_0$ we have a $\TT^c$-equivariant biholomorphism $F_{\hat x_0}:(\CC^*)^n\rightarrow \hat M_0$, $g\mapsto g.\hat x_0$, where $(\CC^*)^n$ acts on itself by (left) multiplication. On the other hand, $\Gamma$ preserves $\hat M_0$, hence we can view $\Gamma$ as a subgroup of biholomorphisms of $(\CC^*)^n$ acting freely. 

\underline{Fact 3}: $\Gamma\subset \TT^c$.

Let $\hat y=F_{\hat x_0}(g)$, with $g\in \TT^c$, be any element of $\hat M_0$ and let $\gamma\in\Gamma$. Denote by $g_\gamma\in \TT^c$ the element verifying $\gamma(\hat x_0)=F_{\hat x_0}(g_\gamma)$. Since the action of $\TT^c$ on $\hat M $ is the lift of the action of $\TT^c$ on $M$, following Remark \ref{liftedaction}, $\Gamma$ commutes with $\TT^c$. We thus have:
\begin{align*}
\gamma(\hat y)=\gamma(g.\hat x_0)=g.\gamma(\hat x_0)=g.g_\gamma.\hat x_0=g_\gamma.\hat y
\end{align*}
implying that $\gamma=g_\gamma\in \TT^c$.

Remark that, if $g_\gamma^j\in\CC^*$ are the components of $g_\gamma$, then for at least one $1\leq j\leq n$, $|g_\gamma^j|\neq 1$. Otherwise we would have $\Gamma\subset \TT$ and so $\TT$ would not act effectively on $M$.

Let now $\gamma\in \Gamma\cong \ZZ^k$ be a nontrivial primitive element and denote by $\Gamma'$ the subgroup generated by $\gamma$. With the same notations as before, we can extend the action of $\Gamma'$ on $\hat M_0\cong(\CC^*)^n$ to a holomorphic action of $\RR$ on $\hat M_0$. Indeed, if $\gamma$ expresses, as an automorphism of $(\CC^*)^n$, as:
\[\gamma(z_1,\ldots,z_n)=(\al_1z_1,\ldots,\al_nz_n),\] 
with $\al_j=\rho_je^{i\theta_j}$ in polar coordinates, then define the one-parameter group: 
\begin{gather*}
\RR\ni t\mapsto \Phi_t\in\Aut((\CC^*)^n)\\
\Phi_t(z_1,\ldots, z_n)=(\rho_1^te^{it\theta_1}z_1,\ldots,\rho_n^te^{it\theta_n}z_n).
\end{gather*} 
Remark that $\RR\cong\{\Phi_t\}_{t\in\RR}$ is a subgroup of $\TT^c\subset\Aut((\CC^*)^n)$, hence its action on $\hat M_0$ actually extends to the whole of $\hat M$. Moreover, this also implies that $\Gamma$ commutes with $\RR$, so the action of $\RR$ descends on $M$ to an effective action of $\RR/\Gamma'\cong S^1$. Let $C\in\Gamma(TM)$ be the real holomorphic vector field generating this action.

\begin{lemma}\label{average2}
There exists on $M$ a LCK form $\Omega^C$ compatible with the complex structure $J$, with corresponding Lee form $\theta^C$, so that $C$ preserves both $\Omega^C$ and $\theta^C$. Moreover, the given action of $\TT$ is still Hamiltonian with respect to this new form.
\end{lemma}

\begin{proof}
For any $t\in\RR$ let $f_t:=\Phi_t^*\phi-\phi\in\ce(\hat M)$ and define $h:=\int_0^1f_tdt\in\ce(\hat M)$. Note that the functions $\{f_t\}_{t\in\RR}$ are $\Gamma$-invariant:
\begin{align}
\delta^*f_t&=\Phi_t^*\delta^*\phi-\delta^*\phi=\Phi_t^*(\phi+\rho(\delta))-(\phi+\rho(\delta))=f_t, \qquad \forall \delta\in\Gamma
\end{align}
hence so is $h$ and they all descend to $M$. Moreover, since $\Lt\subset\ker\theta$, $\phi$ is $\TT$-invariant. As $\TT$ commutes with $\{\Phi_t\}_{t\in\RR}$, it follows that also the function $h$ is $\TT$-invariant.

Let the new Lee form be:
\begin{equation}\label{thetac}
\theta^C:=\int_{\RR/\Gamma'}\Phi_t^*\theta dt=d\int_0^1\Phi_t^*\phi dt=d(\phi+h)=\theta+dh.
\end{equation}
By definition, it is $C$-invariant, but also $\TT$-invariant since $\Lt$ commutes with $C$.

Let now $\Omega_h:=e^h\Omega\in\Omega^2(M)$ and define the new LCK form as:
\begin{equation*}
\Omega^C:=\int_{\RR/\Gamma'}\Phi_t^*\Omega_h dt.
\end{equation*}
Since $d\Omega_h=\theta^C\wedge \Omega_h$ by \eqref{thetac},  we see that the Lee form of $\Omega^C$ is indeed $\theta^C$:
\begin{align*}
d\Omega^C=&\int_{\RR/\Gamma'}\Phi_t^*(d\Omega_h)dt=\int_{\RR/\Gamma'}\Phi_t^*\theta^C\wedge\Phi_t^*\Omega_h dt=\\
=&\theta^C\wedge\int_{\RR/\Gamma'}\Phi_t^*\Omega_h dt=\theta^C\wedge\Omega^C.
\end{align*}
Again, the $C$-invariance of $\Omega^C$ follows from its definition. Moreover, since $h$ is $\TT$-invariant and $\TT$ commutes with $\RR/\Gamma'$, also $\Omega^C$ is $\TT$-invariant.

Finally, $\LL_C\theta^C=0$ implies that $\theta^C(C)$ is constant. On the other hand $\theta^C(C)=\LL_C\phi$, and since $\phi$ is not even $\Gamma$-invariant, it follows that $\theta^C(C)=\la\neq0$. Hence, by Lemma \ref{firstkind}, the form $\eta=-\frac{1}{\la}\iota_{C}\Omega^C\in\Omega^1_M$ verifies $\Omega^C=d^{\theta^C}\eta$. Moreover, $\eta$ is automatically $\TT$-invariant, since both $C$ and $\Omega^C$ are. Therefore, cf. Remark \ref{exacthamiltonian}, we have a moment map for the action of $\TT$ on $(M, J,\Omega^C)$ given by $\mu^C(X)=-\eta(X)$, implying that the action is still Hamiltonian.
\end{proof}

\begin{lemma}
The minimal cover corresponding to the form $\Omega^C$ is $\hat M$.
\end{lemma}
\begin{proof}
Let $p_C:\hat M_C\rightarrow M$ be the minimal \Ka\ cover corresponding to $\Omega^C$ with deck group $\Gamma_C$, and denote by $p:\hat M\rightarrow M$ the projection corresponding to $\Omega$. We have $p\Phi_t=\Phi_tp$ for any $t\in\RR$, by making no distinction of notation between objects on $M$ and on $\hat M$. 
We see, by \eqref{thetac} in Lemma \ref{average2}, that $p^*\theta^C=d\phi^C$ is exact, where $\phi^C=\phi+p^*h$. So  $\hat M$ is a covering of $\hat M_C$ and $\Gamma_C$ is a subgroup of $\Gamma'$. On the other hand, by the same lemma, $h$ is $\Gamma$-invariant, so for any $\delta\in\Gamma'$ we have $\delta^*\phi^C=\rho(\delta)+\phi^C$. Thus no element of $\Gamma'$ preserves $\phi^C$, therefore $\hat M_C=\hat M$.
\end{proof}

We also give here a lemma since it follows directly from the above considerations, but we will not make use of it in the sequel.

\begin{lemma}\label{rankGamma}
The rank of $\Gamma$ is 1.
\end{lemma}
\begin{proof}
With the same notations as before, suppose there existed some $\gamma'\in\Gamma$ independent (over $\ZZ$) of $\gamma$. Then, in the same way, $\gamma'$ would generate another real holomorphic vector field $C'\in\Gamma(TM)$, independent of $C$. Indeed, if this was not the case, then suppose we have $C=aC'$ with $a\in\RR$. Then the corresponding flows would verify $\Phi^t_C=\Phi^t_{aC'}=\Phi^{at}_{C'}$. In particular, for any $m\in\ZZ$ we would have that $\Phi^m_C=\Phi_{C'}^{am}\in\Gamma$. From the independence of $\gamma$ and $\gamma'$ it follows that $a\not\in\QQ$, so the additive subgroup $\Lambda$ generated by $1$ and $a$ in $(\RR,+)$ is not discrete. Now, if we fix some $\hat x\in\hat M$, the map $F:\RR\rightarrow \hat M$, $t\mapsto \Phi^t_{C'}(\hat x)$ is continuous, so also $F(\Lambda)\subset \hat M$ is not discrete. But $F(\Lambda)$ is contained in the fiber of the covering map through $\hat x$ which must be discrete, hence we have a contradiction.

Now let $\Omega'$ be the LCK form obtained by averaging $\Omega^C$ with respect to $C'$, as in Lemma \ref{average2}. We would thus have an effective holomorphic action of $\TT^{n+2}$ on $
M$ generated by $\Lt\oplus\RR C\oplus \RR C'$, which is moreover conformal with respect to $\Omega'$. But by Proposition \ref{maxdim} this is impossible.
\end{proof}

From now on, to simplify notations, denote by $\Omega$ and by $\theta$ the forms $\Omega^C$ and $\theta^C$ obtained in Lemma \ref{average2}. Let $\pi_{i\Lt}:\Lt^c\rightarrow i\Lt$ and $\pi_{\Lt}:\Lt^c\rightarrow \Lt$ be the natural projections, and consider the vector field $B':=\pi_{i\Lt}(C)\in i\Lt$. Since $\Lt\subset\ker \theta$, we have $\theta(B')=\theta(C)=\la$, so let $B=-\frac{1}{\la}B'\in i\Lt$.  Moreover, since $J\pi_{i\Lt}=\pi_\Lt J$, we also have $JB=-\frac{1}{\la}\pi_\Lt(JC)\in\Lt$. Since $B$ is a difference of vector fields preserving $\Omega$, it also preserves $\Omega$ and so does $JB$, being in $\Lt$. 

Consider, on the minimal cover $\hat M$, the \Ka\ form $\Omega_0=e^{-\phi}\Omega$ with corresponding metric $g_0$, where $d\phi=\theta$. We have: 
\begin{equation*}
\LL_B\Omega_0=-\theta(B)\Omega_0=\Omega_0 \text{  and } \LL_{JB}\Omega_0=0.
\end{equation*} 
Let $\eta_0:=\iota_B\Omega_0$ and $f_0:=\norm B^2_{g_0}=\eta_0(JB)$. It follows, by the above: 
\[d\eta_0=\LL_B\Omega_0=\Omega_0.\]
Since $JB$ commutes with $B$ and preserves $\Omega_0$, it also preserves $\eta_0$. Hence we have:
\begin{align*}
0=\LL_{JB}\eta_0=d\iota_{JB}\eta_0+\iota_{JB}d\eta_0=df_0+J\eta_0
\end{align*}
implying:
\begin{equation}\label{Jeta}
\eta_0=Jdf_0 \text{ and } \Omega_0=d\eta_0=dd^cf_0.
\end{equation}
 
 Now, since both $B$ and $JB$ are $\Gamma$-invariant, we have, for any $\gamma\in\Gamma$:
 \begin{equation*}
 \gamma^*f_0=(\gamma^*\Omega_0)(B,JB)=e^{-\rho(\gamma)}f_0
 \end{equation*}
 hence $f_0$ is a $\Gamma$-equivariant function, which is moreover strictly positive. Therefore, 
 \begin{equation}
 \Omega':=\frac{1}{f_0}dd^cf_0\end{equation} 
 is a LCK form on $M$, in the same conformal class as $\Omega$, with corresponding Lee form $\theta'=-d\ln f_0$. 
 
 Finally, we have, by \eqref{Jeta}:
 \begin{equation*}
 \iota_{JB}\Omega'=\frac{1}{f_0}J\eta_0=-\frac{1}{f_0}df_0=\theta'
 \end{equation*}
 which implies that $-B$ is the fundamental vector field $(\theta')^\#$. In particular, since $B$ is both holomorphic and an infinitesimal automorphism of $\Omega'$, it is also Killing, so: 
 \begin{equation}
 \nabla \theta'=d\theta'=0
 \end{equation}
 implying that $(\Omega',\theta')$ is the Vaisman structure that we have been looking for. This ends the proof of the theorem.
 \end{proof}
 
 Now, as a consequence of the main result, of \cite[Proposition~5.4]{mmp} and of \cite[Theorem~9.1]{d} we have:
 
 \begin{corollary}\label{cor}
 Let $(M,J,[\Omega])$ be a toric LCK manifold, strict or not. If $(M,J)$ is \Ka ian, then $M$ is simply connected, and in particular $b_1(M)$, the first Betti number of $M$, is $0$. If not, then $b_1(M)=1$.
 \end{corollary}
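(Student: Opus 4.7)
The plan is to dichotomise according to whether $(M,J)$ admits any \Ka\ metric, and in each case reduce to one of the three results cited: Theorem A, \cite[Proposition~5.4]{mmp}, or \cite[Theorem~9.1]{d}.

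Suppose first that $(M,J)$ is \Ka ian. By the theorem of Vaisman recalled in the excerpt (attributed to \cite{va0}), the LCK form $\Omega$ is then globally conformally \Ka, so after a conformal change within $[\Omega]$ we obtain a genuine \Ka\ form on $M$ preserved by the $\TT$-action. The action remains Hamiltonian in the ordinary (untwisted) sense --- formula \eqref{hamconf} converts twisted Hamiltonians into ordinary ones once $\theta$ becomes exact --- so $M$ is a compact toric \Ka\ manifold in the classical sense. Invoking \cite[Proposition~5.4]{mmp}, which treats precisely this situation (and ultimately rests on Delzant's classification), we conclude that $M$ is simply connected, whence $b_1(M) = 0$.

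Suppose instead that $(M,J,[\Omega])$ is strict LCK, so that $(M,J)$ admits no \Ka\ metric. Applying Theorem A yields a (possibly nonconformal) LCK form $\Omega'$ on $M$ whose associated metric $g'$ is Vaisman and with respect to which the same torus action is still twisted Hamiltonian. Hence $M$ is a compact toric Vaisman manifold, and invoking \cite[Theorem~9.1]{d} directly gives $b_1(M) = 1$.

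The main obstacle is the bookkeeping of verifying that the hypotheses of the external results match ours: in the first case one must confirm that the \Ka\ form produced supports an ordinary Hamiltonian $\TT$-action so that \cite[Proposition~5.4]{mmp} applies, and in the second that the class of Vaisman manifolds treated in \cite{d} includes those produced by Theorem A. Both checks follow readily from the definitions of Section~3 and from the explicit construction in the proof of Theorem A, so no substantive new work is needed beyond the main theorem.
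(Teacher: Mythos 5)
Your argument follows the paper's intended route exactly: the paper offers no separate proof of this corollary, merely listing Theorem A, \cite[Proposition~5.4]{mmp} and \cite[Theorem~9.1]{d} as the ingredients, and your dichotomy (Vaisman's theorem \cite{va0} to reduce the \Ka ian case to a compact toric \Ka\ manifold; Theorem A to reduce the strict case to a compact toric Vaisman manifold) is precisely how those ingredients are meant to be assembled. The one concrete error is that you have attached the two external references to the wrong halves of the dichotomy. Danilov's \cite[Theorem~9.1]{d} is the statement that a compact smooth toric variety is simply connected, so it is the result needed in the \Ka ian case (after Delzant identifies the compact toric \Ka\ manifold with such a variety); it says nothing about Vaisman manifolds. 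Conversely, \cite[Proposition~5.4]{mmp} is the statement that a compact toric Vaisman manifold has first Betti number $1$ --- the paper itself says so explicitly in the Final Remarks when discussing Lemma \ref{rankGamma} --- so it is the result needed in the strict case, not a simple-connectedness statement about toric \Ka\ manifolds. Swapping the two citations back makes your proof agree with the paper's.
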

 
 \section{Final Remarks}

\begin{remark}
In the last part of the proof, once we had the holomorphic symplectic infinitesimal automorphisms $B$ and $JB$, we could have applied directly a theorem of Kamishima and Ornea in \cite{ko}. Indeed, their result states that presence of such vector fields, generating a what they call holomorphic conformal flow, assures the existence, in the given conformal class, of a Vaisman metric. However, we chose to construct the Vaisman structure explicitly, hence also giving an alternative proof of that theorem. Remark that to switch from conformal to symplectic automorphisms of $\Omega$, one only needs to consider the corresponding form $\Omega_G$ to a Gauduchon metric in the conformal class, for then we have, cf. \cite[Proposition~3.8(i)]{mmp}, $\aut(M,J,\Omega_G)=\aut(M,J,[\Omega])$. 
\end{remark}

\begin{remark}
Lemma \ref{rankGamma} can also be seen, a posteriori, as a consequence of a result of \cite[Proposition~5.4]{mmp}, where it is shown that a compact toric Vaisman manifold has first Betti number $1$. 
\end{remark}

\begin{remark}
In the symplectic setting it follows, by using the Delzant classification, that every compact toric symplectic manifold is in fact a toric \Ka\ manifold, i.e. there exists an integrable complex structure, preserved by the torus action, compatible with the given symplectic form. It turns out that this does not happen in the LCS setting, i.e. a toric LCS manifold is not necessarily toric LCK. We illustrate this by the following simple example.
\end{remark}

\begin{example}

Consider on the compact manifold $M=(S^1)^4$ the action of $T=\TT^2$ given by: 
\[(e^{it_1},e^{it_2}).(e^{i\theta_1},e^{i\theta_2},e^{i\theta_3},e^{i\theta_4})=(e^{i(t_1+\theta_1)},e^{i(t_2+\theta_2)},e^{i\theta_3},e^{i\theta_4})\]
where $\theta_1,\ldots, \theta_4$ and $t_1,t_2$ are the polar coordinates on $M$, respectively $T$, given by the exponential map $\exp:\RR\rightarrow S^1, \theta\mapsto e^{i\theta}$. Let $\nu$ be the volume form on $S^1$ such that $\exp^*\nu=d\theta$, let $p_j:M\rightarrow S^1$ be the canonical projection on the $j$-th factor of $M$ and let $\nu_j:=p_j^*\nu$, where $j\in\{1,2,3,4\}$. Define the $T$-invariant 1-forms on $M$:
\[\theta:=\nu_4 \text{ and } \eta:=\sin\theta_3\nu_1+\cos\theta_3\nu_2.\]
Then we have:
\[d\eta=\cos\theta_3\nu_3\wedge\nu_1-\sin\theta_3\nu_3\wedge\nu_2   \text{ and } d\eta\wedge\eta=\nu_3\wedge\nu_1\wedge\nu_2\]
implying that $\eta$ induces a contact form on $p_1^*S^1\times p_2^*S^1\times p_3^*S^1$, hence $\Omega=d^\theta\eta\in\Omega^2(M)$ is a $T$-invariant LCS form on $M$. Moreover, clearly $\Lie(T)\subset\ker\theta$, so $\Lie(T)\subset \aut'(M,[\Omega])$. Thus, by Remark \ref{exacthamiltonian}, the $T$-invriant form $-\eta$ gives a moment map for the $T$-action, hence $(M,[\Omega])$ is a toric LCS manifold. On the other hand, $b_1(M)=4\notin\{0,1\}$, so by Corollary \ref{cor} $M$ cannot admit a toric LCK structure (strict or not).
\end{example}

Theorem \ref{TheoremAA} together with the papers \cite{p}, \cite{mmp} and \cite{l}  lead to a classification of toric LCK manifolds, at least as toric complex manifolds. Indeed, now we know that we have a toric Vaisman metric on any toric LCK manifold. Next, since the universal cover of the Vaisman manifold is the \Ka\ cone over a Sasaki manifold, \cite{p} shows that the corresponding Sasaki manifold is also toric in a natural way. Moreover, Lemma \ref{rankGamma} or also \cite[Proposition~5.4]{mmp} imply that the Sasaki manifold is compact. Finally, any toric Sasaki manifold is in particular a toric contact manifold, and the last ones, when compact, were given a classification in \cite{l}.

\end{document}